\documentclass[12pt]{article}%

\usepackage{graphicx}
\DeclareGraphicsRule{.1}{mps}{.1}{}
\usepackage{graphics}

\usepackage{color}
\usepackage{amsmath}
\usepackage{amsfonts}
\usepackage{amssymb}
\usepackage{enumerate}
\usepackage[normalem]{ulem}

\DeclareGraphicsRule{.1}{mps}{.1}{}

\setlength{\marginparwidth}{0in} \setlength{\marginparsep}{0in}
\setlength{\oddsidemargin}{0in} \setlength{\evensidemargin}{0in}
\setlength{\textwidth}{6.5in} \setlength{\topmargin}{-0.25in}
\setlength{\textheight}{8.5in}

\definecolor{PineGreen}{cmyk}{0.9,0,1,0.40}
\def\Green{\color{black}}

\setcounter{MaxMatrixCols}{30}
\newtheorem{theorem}{Theorem}[section]

\newtheorem{corollary}[theorem]{Corollary}

\newtheorem{definition}[theorem]{Definition}

\newtheorem{proposition}[theorem]{Proposition}

\newtheorem{remark}[theorem]{Remark}

\newenvironment{proof}[1][Proof]{\textbf{#1.} }{\ \rule{0.5em}{0.5em}}

\newcommand{\E}{{\rm \bf E}}
\newcommand{\Leb}{{\rm Leb}}

\newcommand{\prob}{{\rm \bf P}}

\newcommand{\dN}{{\mathbb N}}

\newcommand{\dR}{{\mathbb R}}

\newcommand{\rmd}{{\rm d}}

\newcommand{\calF}{{\cal F}}

\newcommand{\calM}{{\cal M}}

\newcommand{\calS}{{\cal S}}

\newcommand{\calU}{{\cal U}}

\newcommand{\ep}{\varepsilon}
\def\ind{{\bf 1}\hskip-2.5pt{\rm l}}

\title{Guessing a Random Function
	and Repeated Games in Continuous Time\thanks{Solan acknowledges the support of the Israel Science Foundation, grant \#211/22.
}}
\author{Catherine Rainer%
	\thanks{Univ Brest, UMR CNRS 6205, 6, avenue Victor-le-Gorgeu, B.P. 809, 29285 Brest cedex, France.
		e-mail: Catherine.Rainer@univ-brest.fr.} 
	\ and Eilon Solan%
	\thanks{The School of Mathematical Sciences, Tel Aviv
		University, Tel Aviv 6997800, Israel. e-mail: eilons@post.tau.ac.il.} }
\date{\today}

\begin{document}
	
\maketitle

\begin{abstract}
We study a game
where one player selects a random function,
and the other has to guess that function,
and show that with high probability the second player can correctly guess most of the random function.
We apply this analysis to continuous-time repeated games
played with mixed strategies with delay,
identify good responses of a player to any profile of her opponents,
and show that the minmax value coincides with the minmax value in pure strategies of the one-shot game.
\end{abstract}

\bigskip

\noindent
\textbf{Keywords:}
Repeated games, continuous time games, random strategies with delay, value.



\bigskip
\noindent\textbf{Declarations of interest:} none.
	
\section{Introduction}

Repeated games in continuous time are studied since the late 80's. 
Various authors proposed frameworks to analyze these games
and compared the prediction of the continuous-time model to those of the corresponding discrete-time model,
see, e.g., Simon and Stinchecombe (1989), Bergin and MacLeod (1993), and Al\'os-Ferrer and Kern (2015).

A class of games that contains repeated games in continuous time is that of differential games, which has been widely studied since the seminal works of Friedman (1971) and Varaya (1967), see, e.g., Bressan (2011) and the references therein.
It is well known that under the so-called Isaac's condition (and suitable technical regularity assumptions on the data),
differential games admit an equilibrium (potentially not subgame perfect), when players use pure non-anticipative strategies, see Evans and Souganidis (1984) in the zero-sum case,
and Kleimenov (1993) and Buckdahn, Cardaliaguet, and Rainer (2004) for the non-zero-sum case.
Cardaliaguet (2007) proved the existence of the value for zero-sum games with asymmetric information, using random strategies with delay.
As a by-product of this last result, 
one deduces the existence of the value and its characterization in repeated games in continuous time 
if and only if Isaac's condition holds,
even if players are allowed to play randomly.

In this paper, we identify a good response of a player in a repeated game in continuous time to any given profile of mixed strategies with delay of her opponents, namely, a strategy that guarantees a payoff no less than the minmax value of the player in pure strategies, up to some small error term.
To highlight the main idea of the construction, we concentrate on
the continuous-time Matching Pennies game.
The reader will have no difficulty to extend the construction to all multiplayer repeated games in continuous time.
We hope that our technique will pave the way to an analogous construction for differential games.

	
The driving force behind this result can be illustrated by the following game, which we term ``guessing a random function'', and is played between two players, say, Aqua and Bard. 
In this game, Aqua chooses a random function $f : [0,1] \to \{a,b\}$.
Formally, Aqua chooses a probability space $(\Omega,\calF,\prob)$ and a measurable function $f : \Omega \times [0,1] \to \{a,b\}$.
	Nature then chooses $\omega \in \Omega$ according to $\prob$,
	and thereby generates a random function $f(\omega,\cdot) : [0,1] \to \{a,b\}$.
Bard's goal is to guess $f(\omega,\cdot)$.
	She does so in the following way.
First, Bard selects a partition $0 = t_0 < t_1 < \dots < t_K = 1$.
	The rest of the game is divided into $K$ rounds.
	In round $k$, $k=0,1,\ldots,K-1$,
	Bard chooses a function $g : [t_k,t_{k+1})$ and is then told the restriction of $f$ to $[t_k,t_{k+1})$.
	The payoff, which Bard tries to maximize and Aqua to minimize,
	is the Lebesgue measure of $\{t \in [0,1) \colon f(t) = g(t)\}$.

In Section~\ref{section:1}, for every choice of Aqua we construct a response of Bard that correctly guesses most of the random section $f(\omega,\cdot)$.
In particular, 
the minmax value of the ``guessing a random function'' game is 1.
The reason is that the measurability of $f$, 
	together with the fact that the range of $f$ is finite,
	imply that if the differences $(t_{k+1}-t_k)_{k \in \dN}$ are sufficiently small,
	then with high probability, the knowledge of $f(\omega,\cdot)$ on the interval $[t_k,t_k+\ep_k)$ for some small $\ep_k > 0$ reveals 
	the value of $f(\omega,\cdot)$ on most of the interval $[t_k+\ep_k,t_{k+1})$.
	In other words, a mixed non-anticipative strategy with delay is nearly analogous to a pure strategy in discrete time, rather than to a mixed strategy in discrete time.
	
In Section~\ref{section:2} we relate the results on the ``guessing a random function'' game to those in the Matching Pennies game in continuous time.
We explain through this simple example that when the players' strategy set is the set of mixed strategies with delay, the minmax value of a player in a repeated game in continuous time is her minmax value in pure strategies of the one-shot game.
In particular, 
in continuous-time games, 
using mixed strategies with delay 
does not increase the amount a player can defend above the minmax value in pure strategies.

\section{The Guessing a Random Function Game}
\label{section:1}
	
	In this section we study the following two-player zero-sum game, where one player randomly selects a measurable function defined on $[0,1)$ with values in a set of two elements,
	and the other player has to guess that function.
	
	Formally, the game is played between two players, Aqua and Bard,
	and evolves as follows.
	\begin{enumerate}
		\item
		Aqua selects a probability space $(\Omega,\calF,\prob)$ and a jointly-measurable function $f : \Omega \times [0,1) \to \{a,b\}$.
		For each $(\omega,t) \in \Omega \times [0,1]$ denote $f_\omega(t) = f(\omega,t)$.
		The choices of Aqua are told to Bard.
		\item
		Nature selects $\omega \in \Omega$ according to $\prob$.
  Bard is not informed about the choice of $\omega$.
		\item
		Bard selects an increasing sequence $0 = t_0 < t_1 < \cdots < t_K = 1$.
		She then selects a function $g : [0,1) \to \{a,b\}$ as follows.
		For each $k=0,1,\ldots,K-1$,
		\begin{itemize}
			\item Bard selects the restriction of $g$ to $[t_k,t_{k+1})$.
			\item Nature announces the restriction of $f_\omega$ to $[t_k,t_{k+1})$.
		\end{itemize}
	\end{enumerate}
	
	Bard's {\Green expected payoff is
	\[ \gamma^{\rm Bard}(f,g):=\E\left[\int_0^1\ind_{\{ f(t)=g(t)\}}\rmd t\right]=\prob\otimes \Leb\left(\left\{ (t,\omega)\in[0,1]\times\Omega, f(\omega,t)=g(\omega,t)\right\}\right), \]}
 where $\lambda$ stands for the Lebesgue measure on $\dR$.
	
	In words, Aqua randomly selects a measurable function from $[0,1)$ to $\{a,b\}$,
	and Bard guesses this function in pieces:
	she selects a partition of the unit interval,
	and then, for each interval in this partition,
	she guesses the value of the function at all points in the interval after observing the value of the function in all previous intervals.
	
	We will prove that Bard's minmax value is 1:
	for every choice of Aqua, Bard has a response that guarantees that she guesses $f_\omega$ correctly with arbitrarily high probability, for an arbitrarily high proportion of $t \in [0,1)$.
	
Fix then a choice of Aqua,
namely, a probability space $(\Omega,\calF,\prob)$ and a jointly-measurable function $f : \Omega \times [0,1) \to \{a,b\}$.
	Fix also $\ep \in (0,1)$.
	
	Since $f$ is jointly measurable,
	the set $\Omega_1 \subseteq \Omega$ of all $\omega \in \Omega$ for which 
	the section $f_\omega : [0,1) \to \{a,b\}$ is measurable
	has measure 1.
	By Lusin's Theorem,
	for every $\omega \in \Omega_1$ 
	there exists a compact set $C_\omega \subseteq [0,1)$
	such that $f$ is continuous on $C_\omega$ and $\prob(C_\omega) > 1-\ep$.
	Since $C_\omega$ is a compact subset of $[0,1)$, it is a union of finitely many closed intervals.
	Since $f_\omega$ is continuous on $C_\omega$,
	$f_\omega$ is constant on each of those closed subintervals.
	
	If we divide the interval $[0,1)$ into sufficiently many subintervals of equal length, most of them will be included in $C_\omega$.
	Formally,
	there is $n_\omega \in \dN$ such that 
	\[ \frac{1}{2^{n_\omega}} \cdot \# \left\{k \in \{0,1,\ldots,2^{n_\omega}-1\} \colon \left[\frac{k}{2^{n_\omega}},\frac{k+1}{2^{n_\omega}}\right] \subseteq C_\omega\right\} > 1-2\ep. \]
	Let $n_* \in \dN$ be sufficiently large such that
	\[ \prob\bigl( \Omega_1 \cap \{n_\omega \leq n_*\} \bigr) > 1-\ep. \]
	This definition implies that for every $\omega \in \Omega_1 \cap \{n_\omega \leq n_*\}$
	and every $k$ such that $[\frac{k}{2^{n_\omega}},\frac{k+1}{2^{n_\omega}}] \subseteq C_\omega$,
	the section $f_w$ is constant on $[\frac{k}{2^{n_\omega}},\frac{k+1}{2^{n_\omega}}]$.
	
	We are now ready to define Bard's selection.
	Let $K = 2^{n_* + 1}$,
	$t_{2k} = \frac{k}{2^{n_*}}$,
	and $t_{2k+1} = \frac{k+\ep}{2^{n_*}}$ for each $k \in \{0,1,\ldots,2^{n_*}-1\}$.
	The function $g$ is defined as follows: 
	For each $k \in \{0,1,\ldots,2^{n_*}-1\}$,
	\begin{itemize}
		\item 
		on $[t_{2k},t_{2k+1})$ the function $g$ is defined arbitrarily;
		\item 
		on $[t_{2k+1},t_{2k+2})$
		the function $g$ is constant,
		and is equal to the value that is more common on $[t_{2k},t_{2k+1})$.
		That is, for each $t \in [t_{2k+1},t_{2k+2})$, $g(t) = a$ if $\lambda(\{s \in [t_{2k},t_{2k+1}) \colon g(s) = a\}) > \frac{\ep}{2^{n_*+1}}$, and $g(t) = b$ otherwise.
	\end{itemize}
	
	For $t \in \bigcup_{k=0}^{2^{n_*}-1} [t_{2k+1},t_{2k+2})$,
	Bard's guess is correct provided that
	(a) $\omega \in\Omega_1$,
	(b) $n_\omega \leq n_*$, and
	(c) $[\frac{k}{2^{n_\omega}},\frac{k+1}{2^{n_\omega}}] \subseteq C_\omega$.
	Therefore, Bard's payoff when she responds with this strategy is at least $(1-\ep)(1-2\ep)$.

 The function $g$ depends on $f$, and is itself a random function.
 We denote it by $\alpha_\ep(f) : \Omega \times [0,1] \to \{a,b\}$.
By construction, $f$ and $\alpha_{\ep}(f)
$ coincide on a set whose measure is larger than $1-3\ep$.
	
The following proposition summarizes the discussion above.

\begin{proposition}
		\label{minmax for choosing a random}
		For every function $f:\Omega\times[0,1]\to\{a,b\}$ and every $\ep>0$, the function $\alpha_\ep(f)$ satisfies
		\[ \prob\bigl( \Leb\left(\{t\in[0,1],f(t)=\alpha_\ep(f)(t)\}\right)\geq 1-2\ep\bigr)\geq 1-\ep.\]
\end{proposition}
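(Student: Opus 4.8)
The plan is to fix $\ep \in (0,1)$ and to prove the proposition by exhibiting, for the given random function $f$, the explicit response $\alpha_\ep(f)$ constructed above, and then showing that it succeeds on a set of $\omega$ of probability at least $1-\ep$. The guiding principle is that a measurable function taking only the two values $a,b$ is, off a set of small measure, locally constant; consequently, observing $f_\omega$ on a short initial sub-interval of a dyadic block determines $f_\omega$ on the remainder of that block. This is what turns Bard's observation-with-delay into an effective prediction device, and it is the mechanism behind the whole construction.

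First I would isolate the full-measure set $\Omega_1$ on which the section $f_\omega$ is measurable, which follows from joint measurability via Fubini. For each $\omega \in \Omega_1$, Lusin's theorem supplies a compact $C_\omega \subseteq [0,1)$ with $\Leb(C_\omega) > 1-\ep$ on which $f_\omega$ is continuous; since $C_\omega$ is a finite union of closed intervals and $f_\omega$ is $\{a,b\}$-valued, $f_\omega$ is constant on each of these intervals. Next I would pass to a dyadic scale, choosing $n_\omega \in \dN$ so that the fraction of dyadic blocks $[\frac{k}{2^{n_\omega}},\frac{k+1}{2^{n_\omega}}]$ contained in $C_\omega$ exceeds $1-2\ep$, the extra $\ep$ absorbing the finitely many blocks that straddle the endpoints of $C_\omega$. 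Then I would select a single deterministic scale $n_*$ with $\prob(\Omega_1 \cap \{n_\omega \le n_*\}) > 1-\ep$ and have Bard use the partition at scale $n_*$ with the learn-then-guess rule: on the initial $\ep$-fraction of each block she observes $f_\omega$, and on the remaining portion she plays the value that dominated the observed piece.

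The correctness argument then has two ingredients. For $\omega$ in the good event $G := \Omega_1 \cap \{n_\omega \le n_*\}$, any scale-$n_*$ block sitting inside a scale-$n_\omega$ block contained in $C_\omega$ is itself contained in $C_\omega$, since refining the scale from $n_\omega$ to $n_* \ge n_\omega$ cannot destroy inclusion; hence $f_\omega$ is constant on such a block, the observed initial piece reveals that constant, and Bard's guess is correct throughout the guessing portion. The measure of the blocks satisfying this condition is at least $1-2\ep$, and within each of them the guessing portion has relative measure $1-\ep$, while the learning sub-intervals waste total measure $\ep$. Collecting these contributions bounds the total measure of disagreement on $G$ by a constant multiple of $\ep$, which yields the estimate claimed in the proposition; and since $G$ is measurable with $\prob(G) > 1-\ep$ and $G$ is contained in the event that the agreement measure meets the threshold, the outer probability bound follows.

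The step I expect to be the main obstacle is the uniformization of the scale. Both Lusin's set $C_\omega$ and the adapted scale $n_\omega$ depend on $\omega$, whereas Bard must commit to one partition before $\omega$ is drawn; the resolution is precisely the choice of a common $n_*$ together with the monotonicity observation that enlarging the scale preserves the ``block inside $C_\omega$'' property. A secondary technical point is measurability: to make the outer probability well defined one should check that $(\omega,t) \mapsto \ind_{\{f_\omega(t) = \alpha_\ep(f)(\omega,t)\}}$ is jointly measurable, so that $\omega \mapsto \Leb(\{t : f_\omega(t)=\alpha_\ep(f)(\omega,t)\})$ is measurable by Fubini; this follows from the piecewise definition of $\alpha_\ep(f)$, whose values are obtained from $f$ by measurable operations.
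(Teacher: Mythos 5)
Your proposal is correct relative to the paper and follows essentially the same route as the paper's own proof (which is precisely the discussion preceding the proposition): Fubini to get measurable sections, Lusin to get a compact set $C_\omega$ of near-full measure on which $f_\omega$ is locally constant, a single dyadic scale $n_*$ fixed by a probability bound, and the learn-then-guess rule on each dyadic block, with the same monotonicity observation that refining the scale preserves inclusion in $C_\omega$. You also inherit the paper's two blemishes: the assertion that the compact set $C_\omega$ is a finite union of closed intervals (false for a general compact set, e.g.\ a fat Cantor set, so the existence of the scale $n_\omega$ really requires an extra argument such as the Lebesgue density theorem applied to the two disjoint compact level sets), and the bookkeeping that actually yields agreement of measure about $1-3\ep$ rather than the stated $1-2\ep$, which is repaired simply by running the construction with a rescaled $\ep$.
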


Proposition~\ref{minmax for choosing a random} implies that for every $f:\Omega\times[0,1]\to\{a,b\}$ and every $\ep>0$,
		\[ \gamma^{\rm Bard}(f,\alpha_\ep(f))\geq 1-3\ep. \]
This delivers the following conclusion.

\begin{corollary}
The value of the ``choosing a random function'' game is 1.
\end{corollary}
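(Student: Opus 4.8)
The plan is to show that the value, which I take to be Bard's guaranteed payoff $\inf_f \sup_g \gamma^{\rm Bard}(f,g)$ (Aqua commits to $f$, which is then revealed to Bard, who responds with $g$), is squeezed between the trivial upper bound $1$ and a lower bound of $1-3\ep$ valid for every $\ep>0$.

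First I would dispose of the upper bound. By the very definition of the payoff, $\gamma^{\rm Bard}(f,g) = \prob\otimes\Leb(\{(t,\omega) : f(\omega,t)=g(\omega,t)\})$ is the $\prob\otimes\Leb$-measure of a subset of $[0,1]\times\Omega$, so $\gamma^{\rm Bard}(f,g)\le 1$ for every $f$ and every $g$. Taking the supremum over $g$ and then the infimum over $f$ gives $\inf_f\sup_g\gamma^{\rm Bard}(f,g)\le 1$.

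For the matching lower bound I would invoke the consequence of Proposition~\ref{minmax for choosing a random} recorded immediately before the corollary: for every choice $f$ of Aqua and every $\ep>0$, Bard's response $\alpha_\ep(f)$ satisfies $\gamma^{\rm Bard}(f,\alpha_\ep(f))\ge 1-3\ep$. Since $\alpha_\ep(f)$ is one admissible response, this yields $\sup_g\gamma^{\rm Bard}(f,g)\ge 1-3\ep$ for every $f$, whence $\inf_f\sup_g\gamma^{\rm Bard}(f,g)\ge 1-3\ep$. As $\ep>0$ is arbitrary, letting $\ep\downarrow 0$ gives $\inf_f\sup_g\gamma^{\rm Bard}(f,g)\ge 1$, and combining with the upper bound the value equals $1$.

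I do not anticipate a genuine obstacle, since the corollary is essentially a quantitative restatement of the proposition. The only point deserving a sentence of care is the order of quantifiers: the collection $(\alpha_\ep)_{\ep>0}$ is a family of Bard strategies, each assigning to \emph{every} $f$ a response meeting the $1-3\ep$ guarantee, so for a fixed $\ep$ the bound is uniform in $f$; this is exactly what the sequential structure of the game (Aqua first, Bard informed and responding) licenses, and it is what makes the two natural orderings of the value, $\inf_f\sup_g$ and $\sup_{\alpha}\inf_f$, coincide at $1$.
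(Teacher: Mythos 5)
Your proposal is correct and follows exactly the paper's route: the paper derives the corollary from the displayed consequence of Proposition~\ref{minmax for choosing a random}, namely $\gamma^{\rm Bard}(f,\alpha_\ep(f))\geq 1-3\ep$ for every $f$ and $\ep>0$, combined with the trivial upper bound of $1$. Your additional remark that the bound is uniform in $f$ (so that $\sup_\alpha\inf_f$ and $\inf_f\sup_g$ both equal $1$) is a correct and slightly more careful articulation of what the paper leaves implicit, but it is the same argument.
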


\section{The Continuous-Time Matching Pennies Game}
	\label{section:2}
	
	The continuous-time Matching Pennies game is the continuous-time repeated game where the set of players is $I = \{{\rm Aqua}, {\rm Bard}\}$, 
	the sets of actions of the players are $A^{\rm Aqua} = A^{\rm Bard} = \{a,b\}$,
	the players' common discount rate is $r > 0$,
	and the instantaneous payoff functions are
	\[
	g^{\rm Aqua}(a^{\rm Aqua},a^{\rm Bard}) = \left\{
	\begin{array}{lll}
		1, & & a^{\rm Aqua} \neq a^{\rm Bard},\\
		0, &  & a^{\rm Aqua} = a^{\rm Bard},
	\end{array}
	\right.
	\ \ \ \ \ 
	g^{\rm Bard}(a^{\rm Aqua},a^{\rm Bard}) = \left\{
	\begin{array}{lll}
		0, & & a^{\rm Aqua} \neq a^{\rm Bard},\\
		1, &  & a^{\rm Aqua} = a^{\rm Bard}.
	\end{array}
	\right.
	\]
	
	Thus, at every time instance,
	Bard wants to match Aqua's choice,
	while Aqua wants to mismatch Bard's choice.
	
In this section, 
given any strategy of Aqua,
we construct a response of Bard that achieves an expected payoff close to 1.
In particular, the minmax value of the continuous-time Matching Pennies game is 1
(and, symmetrically, the maxmin value is 0).

The analysis we will conduct is not special to the Matching Pennies game.
Analogous arguments show that in all repeated games in continuous time with finitely many players and actions,
when each player's strategy set is the set of mixed strategies with delay, the minmax value of a player is her minmax value in pure strategies of the one-shot game.
In particular, 
in continuous-time games, 
using mixed strategies with delay 
does not increase the amount a player can defend above the minmax value in pure strategies.

We start by defining the class of strategies we will work with, namely, non-anticipative strategies with delay.
	
	\begin{definition}[controls]
		Let $i \in I$.
		A \emph{control of player~$i$} is a measurable function $u^i : \dR_+ \to A^i$.
		The set of all controls of player~$i$ is denoted by $\calU^i$.
	\end{definition}
	
	We endow $\calU^i$ with the topology defined by the distance 
	$d(u^i,\widetilde u^i)=\int_0^\infty re^{-rt} \mathbf{1}_{\{u^i_t \neq \widetilde u^i_t\}} \rmd t$ and denote by ${\cal B}^i$ the associated $\sigma$-algebra on ${\cal U}^i$.
 
	A pair of controls $u = (u^i)_{i \in I} \in \calU = \prod_{i \in I} \calU^i$ indicates the actions chosen by the players in every time instance.
	The discounted payoff of player~$i$ under the pair of controls $u \in \calU$ is
	\[ \gamma^i(u) := \int_{0}^\infty re^{-rt} g^i(u(t)) \rmd t. \]
	
	\begin{definition}[strategies]
		\label{def:strategies}
		Let $i \in I$.
		A \emph{strategy} of player~$i$ is a measurable function $\sigma^i : \calU^{-i} \to \calU^i$, such that  
		there exists an increasing sequence of reals $0=t_0 < t_1 < \cdots$ that converges to $\infty$
		and such that
		\begin{equation}
			\label{equ:alpha}
			\left( u^{-i}=\widetilde u^{-i}\;  \lambda\mbox{-a.s. on }[0,t_n]\right) \Longrightarrow \left(\sigma^i(u^{-i})=\sigma^i(\widetilde u^{-i})\;  \lambda\mbox{-a.s. on }[0,t_{n+1}]\right), \ \ \ \forall n \in \dN.
		\end{equation}
		The set of all strategies of player~$i$ is denoted by $\calS^i$.
	\end{definition}
	In words,
	a strategy determines at time 0 how the player will play in the time interval $[0,t_1)$,
	at time $t_1$ it determines how the player will play in the time interval $[t_1,t_2)$ based on the other player's play in the time interval $[t_0,t_1)$, and so on.
	Every profile $\sigma$ of strategies induces a unique play, denoted by $u_\sigma$.
	
	To play well in the one-shot version of the Matching Pennies game,
	the players have to select their actions randomly.
	The same applies to the continuous-time version of the game.
	We therefore introduce the concept of mixed strategies, 
	which is adapted from Aumann (1964).
	Roughly, a mixed strategy is a probability space together with a function that assigns a strategy to each point in that space.
	To play a mixed strategy, an element $\omega$ in the probability space is chosen randomly, and the player follows the strategy that corresponds to $\omega$.
	
{\Green	\begin{definition}[mixed strategies]
\label{def:mixed:strategies}
Let $i \in I$.
A \emph{mixed strategy} of player~$i$ is given by a 
probability space $(\Omega^i,\calF^i,\prob^i)$ and a measurable function $\mu^i:\Omega^i\times\calU^{-i}\to \calU^i$,
such that $\mu^i(\omega^i,\cdot)\in \calS^i$ for $\prob^i$-a.e.~$\omega^i\in\Omega^i$.
The set of all mixed strategies of player $i$ is denoted by $\calM^i$, and $\calM=\prod_{i\in I}\calM^i$ is the set of mixed strategy pairs.
			\end{definition}

Given a pair 
of mixed strategies, 
we denote by $(\Omega,\calF,\prob)=(\prod_{i\in I}\Omega^i,\bigotimes_{i\in I}\calF^i,\bigotimes_{i\in I}\prob^i)$ the associated product space. We identify all random variable $X$ on $(\Omega^i,\calF^i)$ for some $i\in I$ with a random variable defined on $(\Omega,\calF)$ by setting $X(\omega)=X(\omega^i)$ for $\omega=(\omega^j)_{j\in I}$.

\begin{remark}
In the literature on continuous time zero-sum games with asymmetric information, one encounters random strategies with delay, which are mixed strategies, in the sense we  defined above, but where, for each $i\in I$, all deterministic strategies building the support of $\mu^i$ have the same deterministic time grid, 
see, e.g., Cardaliaguet and Rainer (2009). 
The reason one uses this simpler notion is that it is sufficient for the  problem under consideration: to ensure the existence of a value, it is crucial that the players may play randomly, but there is no need to use random time grids. 
In contrast, in Definition~\ref{def:mixed:strategies} we did not require that for every $\omega^i \in \Omega^i$,
 $\mu^i(\omega^i)$ uses the same time grid,
 so that our result will be valid for a more general class of strategies.
\end{remark}
	
\begin{proposition}
Every pair $\mu = (\mu^i)_{i \in I}$ of mixed strategies
induces a probability distribution over  plays: there exists a probability distribution $\prob_\mu$,
such that,  the canonical process $u$ on $\calU$ satisfies 
$\mu^i(u^{-i})=u^i$ for all $i\in I$, $\prob_\mu$-a.s., a.e.~on $\dR_+$.
	\end{proposition}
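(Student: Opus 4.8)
The plan is to realize $\prob_\mu$ as the pushforward, under a measurable map $\Phi:\Omega\to\calU$, of the product measure $\prob$. For $\prob$-a.e.\ $\omega=(\omega^i)_{i\in I}$ the profile $\sigma_\omega:=(\mu^i(\omega^i,\cdot))_{i\in I}$ is a profile of deterministic strategies, so by the fact recalled after Definition~\ref{def:strategies} it induces a unique play $u_{\sigma_\omega}$; I set $\Phi(\omega):=u_{\sigma_\omega}$ and $\prob_\mu:=\Phi_*\prob$. The reformulation I would lean on is that $u_{\sigma_\omega}$ is precisely the unique fixed point in $\calU$ of the map $F_\omega:\calU\to\calU$ given by $F_\omega(v)=\bigl(\mu^i(\omega^i,v^{-i})\bigr)_{i\in I}$, since a play is by definition characterized by $u^i=\mu^i(\omega^i,u^{-i})$ for all $i$. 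With this in hand, two things remain: the measurability of $\Phi$ (so that $\prob_\mu$ is well defined) and the consistency relation, the former being the substantive point.

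For measurability I would resolve the fixed point by a Picard-type iteration defined without reference to any time grid. Fix a constant control $v^{(0)}\in\calU$ and set $v^{(n+1)}:=F_\omega(v^{(n)})$. Since each $\mu^i$ is jointly measurable and $\omega\mapsto(\omega^i,v^{(n),-i}(\omega))$ is measurable whenever $\omega\mapsto v^{(n)}(\omega)$ is, an immediate induction shows that $\omega\mapsto v^{(n)}(\omega)$ is measurable for every $n$. I would then prove that, for $\prob$-a.e.\ fixed $\omega$, $v^{(n)}(\omega)\to\Phi(\omega)$ in the metric $d$. Granting this, $\Phi$ is a pointwise limit of measurable maps into the metric space $(\calU,d)$, hence measurable, and $\Phi(\omega)=u_{\sigma_\omega}$ (so $F_\omega(\Phi(\omega))=\Phi(\omega)$) holds a.e.

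The convergence is where the delay property \eqref{equ:alpha} does the work. For a fixed $\omega$, let $(t^i_m)_m$ be the time grid of $\mu^i(\omega^i,\cdot)$, write $u:=u_{\sigma_\omega}$, and let $s^i_n$ be the supremum of the times up to which $v^{(n),i}$ agrees $\lambda$-a.e.\ with $u^i$. Applying \eqref{equ:alpha} to $\mu^i(\omega^i,\cdot)$ with the two inputs $v^{(n),-i}$ and $u^{-i}$, which agree on $[0,s^{-i}_n]$, shows that the outputs agree up to the first grid point $t^i_{m+1}$ of player~$i$ strictly beyond $s^{-i}_n$; hence $s^i_{n+1}>s^{-i}_n$ for each $i$, and so the frontier $T_n:=\min_i s^i_n$ is strictly increasing. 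Since each grid is strictly increasing to $\infty$ and thus has only finitely many points in any bounded interval, each advance reaches the next grid point ahead, so $T_n$ cannot stay below any finite bound and $T_n\to\infty$. As $v^{(n)}$ and $u$ coincide $\lambda$-a.e.\ on $[0,T_n]$ in each coordinate, each coordinate distance is at most $e^{-rT_n}\to0$, which is the claimed convergence.

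Finally, the consistency relation follows by construction: for $\prob$-a.e.\ $\omega$ one has $\mu^i(\omega^i,u^{-i})=u^i$ $\lambda$-a.e.\ on $\dR_+$ with $u=\Phi(\omega)$, and since $\prob_\mu$ is the law of $\Phi$ this transfers to the canonical process. The main obstacle I anticipate is precisely that, in the generality of Definition~\ref{def:mixed:strategies}, the grids $(t^i_m)$ depend on $\omega^i$ and differ across players, so one cannot build the play by a single deterministic recursion along fixed grid points and then check measurability grid-by-grid. The grid-free Picard iteration sidesteps this: measurability is inherited from the joint measurability of the $\mu^i$ at each finite stage, while the ($\omega$-wise) grids enter only in the convergence estimate.
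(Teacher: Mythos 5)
Your proof is correct, and while it shares the paper's central mechanism---iterating the strategy maps and letting the delay property \eqref{equ:alpha} advance agreement by at least one grid point per iteration---it is organized genuinely differently. The paper constructs the play directly: it forms the common refinement $(t_n)$ of the players' random time grids and defines the fixed point $\widetilde u$ recursively, one segment $(t_n,t_{n+1}]$ at a time, feeding in the already-built play spliced with an arbitrary continuation; $\prob_\mu$ is then the law of $\widetilde u$, with measurability asserted at each step as a composition of measurable maps. You instead define $\Phi(\omega)$ as the unique play of the deterministic profile $(\mu^i(\omega^i,\cdot))_{i\in I}$ (invoking the fact stated after Definition~\ref{def:strategies}), set $\prob_\mu=\Phi_*\prob$, and obtain measurability of $\Phi$ by exhibiting it as a pointwise a.e.\ limit of the grid-free Picard iterates $v^{(n+1)}=F_\omega(v^{(n)})$, which are measurable at every finite stage by the joint measurability built into Definition~\ref{def:mixed:strategies}. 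Each route buys something. The paper's recursion does not presuppose existence of the play---it constructs it---but its bookkeeping runs along the random times $t_n$, so it tacitly needs the grids, and hence the spliced inputs, to depend measurably on $\omega$, a measurable selection that Definition~\ref{def:mixed:strategies} does not actually supply; your iteration sidesteps exactly this difficulty, since the grids enter only $\omega$-by-$\omega$ in the convergence estimate, which is the point you correctly identify as the crux. The price is that you lean on the paper's unproved assertion that a deterministic strategy profile induces a unique play; this is harmless here, and in any case your own frontier argument (each $T_{n+1}$ reaches at least the first point of the union grid after $T_n$, and local finiteness of that grid forces $T_n\to\infty$) would prove that assertion too, by showing the iterates stabilize on $[0,T_n]$ rather than comparing them with a presupposed fixed point.
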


\begin{proof} The proof is adapted from Cardaliaguet and Rainer (2009).
Let $\mu=(\mu^i)_{i\in I}\in\calM$ be a pair of mixed strategies. 
For each $i\in I$, denote by $(t^i_k)_{k\in\dN}$ the time grid associated to $\mu^i$. 
Define recursively the smallest common grid for all players as follows:
	\begin{itemize}
		\item  $t_0=0$.
		\item If $t_n$ is already defined for some $n\in\dN$, then
$t_{n+1}=\inf\{ t^i_k> t_n, i\in I,k\in \dN\}$. 
	\end{itemize} 
The sequence $(t_n)_{n\in\dN}$ is an increasing  sequence of random variables on $(\Omega,\calF)$ 
that converges to $\infty$ $\prob_\mu$-a.s.

Let us start with an arbitrary pair of controls $\underline u\in\calU$ and define recursively a random pair of controls $\widetilde u$ that will satisfy  $\mu^i(\widetilde u^{-i})=\widetilde u^i$, for each $i\in I$, $\prob$-a.s., a.e.~on $\dR_+$. 

Set $\widetilde u^i_s=\mu^i(\underline u^{-i})_s$, for all $s\in[0,t_1]$.
 By the definition of non-anticipative strategies with delay, $\prob$-a.s.~on $[0,t_1]$  the values of $\mu^i(u^{-i})$ do not depend on $u^{-i}\in\calU^{-i}$ on $[0,t_1]$. In particular, for all $u\in\calU$ that coincide with $\widetilde u$ a.e.~on $[0,t_1]$, $\mu^i(u^{-i})=u^i=\widetilde u^i$ on $[0,t_1]$.
 
Suppose that for some $n\geq 1$ the process $\widetilde u$ is already defined on $[0,t_n]$.
Set
$\overline u_s:=\widetilde u_s$ for $s\leq t_n$ and $\overline u_s:=\underline u_s$ for $s>t_n$;
then $\overline u$ is a $\calU$-valued random variable on $(\Omega,\calF,\prob)$.  
As a composition of measurable maps, for each $i\in I$, $\mu^i(\overline u^{-i}) $ is well defined and also measurable. Set now $\widetilde u^i_i:=\mu^i(\overline u^{-i})_s$ on $(t_n,t_{n+1}]$. 
Again, by the definition of non-anticipative strategies with delay, 
on $[0,t_{n+1}]$ the processes $(\mu^i(\overline u^{-i}))_{i \in I}$ depend only on their restrictions to $[0,t_n]$, 
so that, for all random pairs of controls $u\in\calU$, if $u=\widetilde u$ $\prob$-a.s., a.e.~on $[0,t_{n+1}]$, for all $i\in I$,
then
$\mu^i(u^{-i})=u^i=\widetilde u^i$ $\prob$-a.s., a.e.~on $[0,t_{n+1}]$.

Iterating the construction for all $n\in\dN$, we define the process $\widetilde u$ on $\dR$ as a fixed point of the mixed strategy $\mu$.

Finally, the law of $\widetilde u$ defines a probability $\prob_\mu$ on $\calU$, which satisfies the required relation.
\end{proof}

\bigskip

	We denote by $\E_\mu$ the expectation w.r.t.~$\prob_\mu$.
	The expected discounted payoff to player~$i$ under the mixed strategy profile $\mu$ is, then,
	\[ \gamma^i(\mu) := \E_\mu[\gamma^i(u_\sigma)]. \]}

 
We next construct a good response for Bard against any given strategy of Aqua.


	\begin{proposition}
		\label{matching no eq}
For every mixed strategy $\mu^{\rm Aqua}\in{\cal M}^{\rm Aqua}$ and for all $\ep>0$, there exists a (non-mixed) strategy $\sigma^{\rm Bard}\in\calS^{\rm Bard}$ such that $\gamma^{\rm Bard}(\mu^{\rm Aqua},\sigma^{\rm Bard}) > 1-\ep$. 
By symmetry, for every mixed strategy $\mu^{\rm Bard}\in\calM^{\rm Bard}$,	Aqua has a (non-mixed) strategy $\sigma^{\rm Aqua}$ such that $\gamma^{\rm Aqua}(\sigma^{\rm Aqua},\mu^{\rm Bard}) > 1-\ep$.
		
	\end{proposition}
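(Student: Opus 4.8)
The plan is to reduce the statement to the ``guessing a random function'' game of Section~\ref{section:1}. First I would discard the far future: fix a horizon $T$ with $e^{-rT} < \ep/3$, so that whatever Bard does on $[T,\infty)$ costs her at most $\ep/3$ in discounted payoff, and then work on $[0,T]$. On $[0,T]$ the discounting density $re^{-rt}$ is bounded by $r$, so the discounted mismatch is at most $r$ times the Lebesgue measure of the mismatch set $\{t\le T\colon u^{\rm Bard}(t)\neq u^{\rm Aqua}(t)\}$; it therefore suffices to make the \emph{expected Lebesgue} measure of this set small, and I will arrange the remaining loss to be below $2\ep/3$ by choosing an auxiliary $\ep'$ small relative to $\ep$, $r$ and $T$.

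The core step is to let Bard play the guessing response $\alpha_{\ep'}$ of Section~\ref{section:1} on $[0,T]$ (rescaling $[0,T]$ to $[0,1)$). Concretely, on a dyadic mesh of scale $2^{-N}$ Bard observes Aqua's realized play on a short prefix, of relative length $\ep'$, of each cell, and then plays on the remainder of the cell the value that was more frequent on that prefix; after $T$ she plays arbitrarily. For any fixed mesh this is a legitimate non-mixed strategy with delay in the sense of Definition~\ref{def:strategies}, because Bard's action on the match-part of a cell depends only on Aqua's action on the strictly earlier observation-part. Once $\sigma^{\rm Bard}$ is fixed, the preceding proposition produces the law $\prob_\mu$, and Aqua's realized control $u^{\rm Aqua}(\omega,\cdot)\colon[0,T]\to\{a,b\}$ becomes a genuine random measurable function. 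Identifying it with Aqua's choice $f$ in the guessing game, Proposition~\ref{minmax for choosing a random} guarantees that, \emph{provided the mesh is fine enough for the law of} $u^{\rm Aqua}$, Bard matches on at least a $(1-2\ep')$-fraction of $[0,T]$ with probability at least $1-\ep'$. Together with the horizon truncation and the bound of the previous paragraph, this yields $\gamma^{\rm Bard}(\mu^{\rm Aqua},\sigma^{\rm Bard})>1-\ep$. The statement for Aqua is symmetric: against a mixed strategy of Bard she guesses Bard's realized play in the same way and then plays the \emph{opposite} value, thereby mismatching on most of $[0,T]$.

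The hard part is the feedback loop hidden in ``fine enough for the law of $u^{\rm Aqua}$''. Unlike the guessing game, where Aqua commits to $f$ before Bard moves, here Aqua reacts to Bard, so the law of $u^{\rm Aqua}$ — and hence the Lusin scale $n_*$ that Proposition~\ref{minmax for choosing a random} requires Bard's mesh to beat — itself depends on the mesh $N$ that Bard has chosen. One cannot simply take $N\ge n_*$, since refining the mesh changes $u^{\rm Aqua}$; and the worry is genuine, because if Aqua could oscillate at a scale finer than Bard's mesh on a set of positive measure, Bard's constant guess would match only about half of each such cell. The resolution I would pursue exploits the delay: I would build $\sigma^{\rm Bard}$ and the induced play simultaneously, cell by cell along the common refinement of Bard's mesh and Aqua's (random) grid, exactly as in the construction of $\prob_\mu$ above, so that non-anticipativity is preserved at every stage.

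The key lemma to isolate is then a Lusin-type estimate that is uniform enough to break the circularity. Because $\mu^{\rm Aqua}$ is a single jointly-measurable map and, for each realized Bard control, $u^{\rm Aqua}$ is a fixed measurable $\{a,b\}$-valued function — hence locally constant off an arbitrarily small set — one wants a \emph{deterministic} scale $N$ for which the realized $u^{\rm Aqua}$ is constant on its scale-$2^{-N}$ cell except on a set of expected Lebesgue measure below $\ep'$. Proving this, i.e.\ ruling out the pathology that Aqua out-oscillates Bard's mesh simultaneously at every scale, is the step I expect to demand the most care; it is here that the measurability of Aqua's mixed strategy, rather than any regularity assumption, carries the argument, and it is precisely the quantitative content already distilled in Proposition~\ref{minmax for choosing a random}.
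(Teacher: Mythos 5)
Your skeleton matches the paper's proof — truncation of the horizon at a time $T$ determined by $\ep$ and $r$, a dyadic mesh on $[0,T]$, per-cell application of the guessing response $\alpha_{\ep'}$ of Section~\ref{section:1}, and symmetry for Aqua — and you correctly identify that the entire difficulty is the feedback loop. The gap is in how you propose to close that loop. The ``key lemma'' you isolate (a deterministic scale $N$ at which the \emph{realized} $u^{\rm Aqua}$ is nearly locally constant on scale-$2^{-N}$ cells, in expected Lebesgue measure) is \emph{not} ``precisely the quantitative content already distilled in Proposition~\ref{minmax for choosing a random}'': that proposition treats a random function committed \emph{before} the guesser moves, and says nothing about a function whose law depends on the guesser's mesh. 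So your plan ends exactly where the real work begins: you acknowledge the circular dependence of the Lusin scale on the mesh, but supply no Bard-independent quantity with which to break it, and the reference back to Proposition~\ref{minmax for choosing a random} cannot do so.

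The paper breaks the circularity with a device absent from your proposal: Aqua's \emph{own delay grid}. By Definition~\ref{def:strategies}, each realization $\mu^{\rm Aqua}(\omega^{\rm Aqua},\cdot)$ of Aqua's mixed strategy comes with a sequence $\theta$ of delay times, a random object depending only on $\omega^{\rm Aqua}$; in particular the law of $\theta$ is fixed by $\mu^{\rm Aqua}$ and is completely independent of Bard's strategy. The paper sets $U^n_T$ to be the union of the dyadic cells $[t^n_k,t^n_{k+1}]$ of scale $T2^{-n}$ that contain \emph{no} point of $\theta$; since $\theta$ meets $[0,T]$ in finitely many points a.s., the expected discounted measure of $[0,T]\setminus U^n_T$ tends to $0$, so a mesh $\widehat n$ can be fixed \emph{in advance}, using only the law of $\theta$ — this is the uniform, Bard-independent choice your argument is missing. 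Then, on any cell avoiding $\theta$, non-anticipativity with delay forces Aqua's control on the entire cell to be determined by the history up to the cell's start; conditionally on that history Aqua is genuinely committed there, and only at this point does Proposition~\ref{minmax for choosing a random} legitimately apply, cell by cell, with its own internal subdivision and Lusin scale chosen for the conditional law of Aqua's play on the cell. Note also that this makes your proposed lemma unnecessary: one never needs the realized play to be constant on Bard's cells (the guessing response subdivides each cell internally); one only needs Aqua to be unable to \emph{react} inside the cell, which is what grid avoidance delivers.
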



	
\begin{proof}
Fix $\ep \in(0,1/2)$, and let $T=-\log(\ep)/r$,
so that $\int_T^\infty re^{-rt}\rmd t=\ep$.
Let $\mu^{\rm Aqua}\in\calM^{\rm Aqua}$ be a mixed strategy of Aqua.
		For $n\geq 1$ and $k\in\dN$, set $t^n_k=\frac{kT}{2^n}$. 	
		Let $\theta$ be the random time grid associated to the strategy $\mu^{\rm Aqua}$,
and define
		\[U^n_T=\bigcup_{k\in\{ 0,\ldots,2^n-1\}, (t_k^n,t^n_{k+1})\cap\theta=\emptyset}[t^n_k,t^n_{k+1}].\]
The sequence of sets $(U^n_T)_{n\geq 1}$ increases $\prob^{\rm Aqua}$-a.s. to $[0,T]$ as $n\nearrow\infty$. Therefore, we can find $\widehat n$ such that, for all $n\geq \widehat n$,
\[ \E^{\rm Aqua}\left[\int_0^T re^{-rt}\ind_{[0,T]\setminus U^n_T}(t)\rmd t\right]\leq\ep.\]
Set $\widehat\theta=(\widehat t_k)_{k\in\dN}$, with $\widehat t_k=t^{\widehat n}_k$ for every $k \in \dN$, and $\widehat U_T=U^{\widehat n}_T$.
	
Define a map $\sigma^{\rm Bard} : \calU^{\rm Aqua} \to \calU^{\rm Bard}$ as follows:
for every $u\in\calU^{\rm Aqua}$, every $k\in\dN$, and every $t\in [\widehat t_k,\widehat t_{k+1})$, 
set
\[ \sigma^{\rm Bard}(u)_t=\alpha_{\ep}\left(\widehat u\right)_{\frac{t-\widehat t_k}{\widehat t_{k+1}-\widehat t_k}},\]
where $\widehat u_s=u_{\widehat t_k+(\widehat t_{k+1}-\widehat t_k)s}$, $s\in[0,1]$.
Here two successive time changes permit first to transform the control $u$ on the interval $[\widehat t_k,\widehat t_{k+1}]$ to a control on $[0,1]$ so that the strategy $\alpha_\ep$ can handle it, and then to retransform the output $\alpha_\ep\left(\widehat u\right)$ back to a control on the interval $[\widehat t_k,\widehat t_{k+1}]$.
One can verify that $\sigma^{\rm Bard}\in\calS^{\rm Bard}$ is a non-anticipative strategy with delay for Bard.

By the choice of $T$ and the definition of $\widehat\theta$, we have
\[ \begin{array}{rl}
	\E_{\mu^{\rm Aqua},\sigma^{\rm Bard}}\left[\int_0^\infty re^{-rt}\ind_{\{ u^{\rm Aqua}_t=u^{\rm Bard}_t\} }\rmd t\right]\geq &
		1-\ep -\E_{\mu^{\rm Aqua},\sigma^{\rm Bard}}\left[\int_0^T re^{-rt}\ind_{\{ u^{\rm Aqua}_t\neq u^{\rm Bard}_\}}\rmd t\right]\\
		\geq & 1-\ep
	-\E_{\mu^{\rm Aqua},\sigma^{\rm Bard}}\left[\int_0^T re^{-rt}\ind_{\{ u^{\rm Aqua}_t\neq u^{\rm Bard}_t\}}\ind_{\widehat U_T}(t)\rmd t\right].
	\end{array}
		 \]
The condition $(\widehat t_k,\widehat t_{k+1})\cap\theta=\emptyset$ in the definition of $\widehat U_T$ implies that, on $(\widehat t_k,\widehat t_{k+1})$, $\prob^{\rm Aqua}$-a.s., the control $u^{\rm Aqua}$ depends on $\sigma^{\widehat \theta}_\ep$ only through its restriction to $[0,\widehat t_k)$. In particular, Aqua cannot modify its control during the time interval $(\widehat t_k,\widehat t_{k+1})$, so that  Proposition \ref{minmax for choosing a random} applies: 
by the definition of $\sigma^{\rm Bard}$, we have, $\prob_{\mu^{\rm Aqua},\sigma^{\rm Bard}}$-a.s., on each interval $(\widehat t_k,\widehat t_{k+1})$ included in $\widehat U_T$, 
\[ \int_{\widehat t_k}^{\widehat t_{k+1}}re^{-rt}\ind_{\{ u^{\rm Aqua}_t\neq u^{\rm Bard}_t\}}\rmd t\leq r2^{-\widehat n}\int_0^1 \ind_{\left\{ u^{\rm Aqua}_{(\widehat t_k+(\widehat t_{k+1}-\widehat t_k)s)}\neq u^{\rm Bard}_{\widehat t_k+(\widehat t_{k+1}-\widehat t_k)s}\right\}}\rmd s.\]
Thus
\[
	\E_{\mu^{\rm Aqua},\sigma^{\rm Bard}}\left[\int_0^T re^{-rt}\ind_{\{ u^{\rm Aqua}_t\neq u^{\rm Bard}_t\}}\ind_{\widehat U_T}(t)\rmd t\right]
\leq  r2^{-n_*}\sum_{k=0}^{2^{\widehat n}-1}\E^{\rm Aqua}\left[\int_0^1 \ind_{\left\{ \widetilde u_s=\alpha_\ep(\widetilde u)_{\widehat t_k+s(\widehat t_{k+1}-\widehat t_k)}\right\}}\rmd s\right],\]
with $\widetilde u_s=u^{\rm Aqua}_{\widehat t_k+(\widehat t_{k+1}-\widehat t_k)s}$.
By the definition of $\alpha_\ep$, the last term is smaller that $r\ep$.
The result follows.
	\end{proof}

\end{document}